\documentclass[portuges,12pt,letter]{article}
\usepackage[centertags]{amsmath}
\usepackage{amsfonts}
\usepackage{newlfont}
\usepackage{amscd}
\usepackage{graphics}
\usepackage{epsfig}
\usepackage{indentfirst}
\usepackage{amsxtra}
\usepackage[latin1]{inputenc}
\usepackage{amssymb, amsmath}
\usepackage{amsthm}
\usepackage[mathscr]{eucal}

\newtheorem{thm}{Theorem}[section]

\newtheorem{dfn}[thm]{Definition}

\newtheorem{remark}[thm]{Remark}

\setlength{\textwidth}{18cm} \setlength{\textheight}{22cm}
\setlength{\topmargin}{-2cm} \setlength{\oddsidemargin}{-1cm}
\author{Fabio Silva Botelho \\ Departamento de Matemática \\ Universidade Federal de Santa Catarina, UFSC \\
Florian\'{o}polis, SC - Brazil}

\title{\bf  A variational formulation for relativistic mechanics based on Riemannian   geometry   and its application to the quantum mechanics context}

\begin{document}
\maketitle

\abstract{This article develops a variational formulation of  relativistic nature applicable to the quantum mechanics context.
The main results are obtained through basic concepts on Riemannian geometry. Standards definitions such as vector fields and connection have a
fundamental role in the main action establishment. In the last section, as a result of an approximation for the
main formulation, we obtain the relativistic Klein-Gordon equation. }

\section{Introduction} In this article we develop a variational formulation suitable for the relativistic quantum mechanics approach in a free particle
context. The results are based on fundamental concepts of Riemannian geometry and suitable extensions for the relativistic case. Definitions such as
vector fields, connection, Lie Bracket and Riemann tensor are addressed in the subsequent sections for the main energy construction.

Indeed, the action developed in this article, in some sense, generalizes and extends the one presented in the Weinberg book \cite{50}, in chapter 12 at page 358. In such a book,
the concerned action is denoted by $I=I_M+I_G$, where $I_M$, the matter action, for $N$ particles with mass $m_n$ and charge $e_n,\; \forall n \in \{1, \ldots,N\}$, is given by
\begin{eqnarray}
I_M&=& -\sum_{n=1}^N m_n \int_{-\infty}^\infty\sqrt{-g_{\mu\nu}(x(p)) \frac{dx_n^\mu(p)}{dp}\frac{dx_n^\nu(p)}{dp}} \;dp
\nonumber \\ &&-\frac{1}{4} \int_\Omega \sqrt{g}F_{\mu\nu}F^{\mu\nu}\;d^4x \nonumber \\ &&
+\sum_{n=1}^N e_n \int_{-\infty}^\infty \frac{dx_n^\mu(p)}{dp} A_\mu(x(p))\;dp,
\end{eqnarray}
where $\{x_n(p)\}$ is the position field with concerning metrics $\{g_{\mu\nu}(x(p))\}$ and  $$F_{\mu\nu}=\frac{\partial A_\nu}{\partial x_{\mu}}-\frac{\partial A_\mu}{\partial x_{\nu}}$$ represents the  electromagnetic tensor field
through a vectorial potential $\{A_\mu\}.$

Moreover, the gravitational action $I_G$ is defined by
$$I_G=-\frac{1}{16\pi G}\int_\Omega R(x)\;\sqrt{g}\;d^4x,$$
where
$$R=g^{\mu\nu} R_{\mu\nu}.$$

Here $$R_{\mu\nu}=R_{\mu \sigma \nu}^\sigma,$$
where $$R_{\mu\sigma\nu}^\eta$$ are the components of the well known Riemann curvature tensor.

According to \cite{50}, the Euler-Lagrange equations for $I$ correspond to the Einstein field equations,
$$R^{\mu\nu}-\frac{1}{2} g^{\mu\nu}R+8\pi G T^{\mu\nu}=0,$$
where the energy-momentum tensor $T^{\mu\nu}$ is expressed by
\begin{eqnarray}
T^{\lambda \kappa} &=&\sqrt{g} \sum_{n=1}^N \int_{-\infty}^\infty\frac{dx_n^\lambda(p)}{d\tau_n}\frac{dx_n^\kappa(p)}{d \tau_n}\;\delta^4(x-x_n)\;d\tau_n
\nonumber \\ && +F_\mu^\lambda(x)F^{\mu \kappa}(x)-\frac{1}{4} g^{\lambda \kappa}F_{\mu\nu}F^{\mu\nu}.
\end{eqnarray}

One of the main differences of our model from this previous one, is that we consider a possible variation in the density along the mechanical system.

Also, in our model, the motion of the system in question is specified by a four-dimensional manifold given by the function $$\mathbf{r}(\hat{\mathbf{u}}(\mathbf{x},t))=(ct,X_1(\mathbf{u}(\mathbf{x},t)),X_2(\mathbf{u}(\mathbf{x},t)),X_3(\mathbf{u}(\mathbf{x},t))),$$
 with corresponding mass density $$(\rho \circ \hat{\mathbf{u}}): \Omega \times [0,T] \rightarrow \mathbb{R}^+,$$
where $\Omega \subset \mathbb{R}^3$ and $[0,T]$ is a time interval. At this point, we define $\phi(\hat{\mathbf{u}}(\mathbf{x},t))$ as a complex function such that $$|\phi(\hat{\mathbf{u}}(\mathbf{x},t))|^2
=\frac{\rho(\hat{\mathbf{u}}(\mathbf{x},t))}{m},$$ where $m$ denotes the total system mass at  rest.
We emphasize it seems to be clear that in the previous book the parametrization of the position field, through the parameter $p$, is one-dimensional.

In this work we do not consider the presence of electromagnetic fields.

Anyway, the final expression  of the related new action here developed is given by
\begin{eqnarray}
&& J(\phi,\mathbf{r}, \hat{\mathbf{u}},E)\nonumber \\ &=&\int_0^T \int_\Omega mc\;\sqrt{-g_{ij}\frac{\partial u_i}{\partial t} \frac{\partial u_j}{\partial t}}\;|\phi(\hat{\mathbf{u}}(\mathbf{x},t))|^2\; \sqrt{-g}\;|\det (\hat{\mathbf{u}}'(\mathbf{x},t))|\;d\mathbf{x}\;dt
 \nonumber \\ &&+\frac{\gamma}{2}\int_0^T\int_\Omega g^{jk}\;\frac{\partial \phi }{\partial u_j}\;\frac{\partial \phi^* }{\partial u_k}\;\sqrt{-g}\;|\det(\hat{\mathbf{u}}'(\mathbf{x},t))|\;d\mathbf{x}dt
\nonumber \\ &&+
\frac{\gamma}{4}\int_0^T\int_\Omega g^{jk}\;\left(\phi^* \frac{\partial \phi }{\partial u_l}+\phi \frac{\partial \phi^* }{\partial u_l}\right)\; \Gamma_{jk}^l\;\sqrt{-g}\;|\det(\hat{\mathbf{u}}'(\mathbf{x},t))|\;d\mathbf{x}dt
 \nonumber \\ &&  +\frac{\gamma}{2}\int_0^T\int_\Omega |\phi|^2\;g^{jk}\;\left(\frac{\partial \Gamma_{lk}^l}{\partial u_j}
 -\frac{\partial \Gamma_{jk}^l}{\partial u_l}+\Gamma_{lk}^p \;\Gamma_{jp}^l-\Gamma_{jk}^p \;\Gamma_{lp}^l\right) \;\sqrt{-g}\;|\det(\hat{\mathbf{u}}'(\mathbf{x},t))|\;d\mathbf{x}dt \nonumber \\ &&-\int_0^T E(t)\left( \int_\Omega |\phi(\hat{\mathbf{u}}(\mathbf{x},t))|^2\;\sqrt{-g}\; |\det(\hat{\mathbf{u}}'(\mathbf{x},t))|\;
d\mathbf{x}/c-1\right)\;cdt.
 \end{eqnarray}
 Observe that the action part\begin{eqnarray}&&\frac{\gamma}{2}\int_0^T\int_\Omega |\phi|^2\;g^{jk}\;\left(\frac{\partial \Gamma_{lk}^l}{\partial u_j}
 -\frac{\partial \Gamma_{jk}^l}{\partial u_l}+\Gamma_{lk}^p \;\Gamma_{jp}^l-\Gamma_{jk}^p \;\Gamma_{lp}^l\right) \;\sqrt{-g}\;|\det(\hat{\mathbf{u}}'(\mathbf{x},t))|\;d\mathbf{x}dt \nonumber \\ &=&\frac{\gamma}{2}\int_0^T\int_\Omega |\phi|^2 \hat{R} \;\sqrt{-g}\;|\det(\hat{\mathbf{u}}'(\mathbf{x},t))|\;d\mathbf{x}dt,\end{eqnarray}
where $$\hat{R}=g^{jk}\hat{R}_{jk},$$  $$\hat{R}_{jk}=\hat{R}_{jlk}^l$$ and $$\hat{R}_{ijk}^l=\frac{\partial \Gamma_{jk}^l}{\partial u_i}-\frac{\partial \Gamma_{ik}^l}{\partial u_j}+\Gamma_{jk}^p \;\Gamma_{ip}^l-\Gamma_{ik}^p \;\Gamma_{jp}^l$$ represents the Riemann curvature tensor,
  corresponds tho the Hilbert-Einstein one, as specified in the subsequent sections.

In the last section, we show how such a formulation may result, as an approximation,  the  well known relativistic Klein-Gordon one and the respective Euler-Lagrange  equations. We believe the main results obtained may be extended to more complex mechanical systems, including in some  extent, the  quantum
mechanics approach.

 Finally, about the references, details on the Sobolev Spaces involved may be found in \cite{1,12}. For standard references in quantum mechanics, we refer to
\cite{55,1410,101} and the non-standard \cite{100}.

\section{ Some introductory topics on vector analysis and  Riemannian geometry}

In this section we   present some introductory remarks on Riemannian geometry.
The results  here developed have been presented in details in \cite{19}. For the sake of completeness, we repeat some of the proofs.

We start with the definition of surface in $\mathbb{R}^n.$

\begin{dfn}[Surface in $\mathbb{R}^n$, the respective tangent space and the dual one] Let $D \subset \mathbb{R}^m$ be an open, bounded, connected set with a regular (Lipschitzian)
boundary denoted by $\partial D$. We define a m-dimensional $C^1$ class surface $M \subset \mathbb{R}^n$, where $1 \leq m <n$, as the range of a function
$\mathbf{r}:D \subset \mathbb{R}^m \rightarrow \mathbb{R}^n,$ where
$$M=\{\mathbf{r}(\mathbf{u})\;:\; \mathbf{u}=(u_1,\ldots,u_m) \in D\}$$
and
$$\mathbf{r}(\mathbf{u})=\hat{X}_1(\mathbf{u})\mathbf{e}_1+\hat{X}_2(\mathbf{u})\mathbf{e}_ 2+\cdots +\hat{X}_n(\mathbf{u})\mathbf{e}_n,$$
where $\hat{X}_k:D \rightarrow \mathbb{R}$ is a $C^1$ class function, $\forall k \in \{1, \ldots, n\},$ and $\{\mathbf{e}_1, \ldots, \mathbf{e}_n\}$
is the canonical basis of $\mathbb{R}^n.$

Let $\mathbf{u} \in D$ and  $p=\mathbf{r}(\mathbf{u}) \in M.$ We also define the tangent space of $M$ at $p$, denoted by $T_p(M)$, as
$$T_p(M)=\left\{ \alpha_1 \frac{\partial \mathbf{r}(\mathbf{u})}{\partial u_1}+\cdots + \alpha_m \frac{\partial \mathbf{r}(\mathbf{u})}{\partial u_m}
\;:\; \alpha_1, \ldots, \alpha_m \in \mathbb{R}\right\}.$$

We assume $$\left\{\frac{\partial \mathbf{r}(\mathbf{u})}{\partial u_1}, \cdots, \frac{\partial \mathbf{r}(\mathbf{u})}{\partial u_m}\right\}$$
to be a linearly independent set $\forall \mathbf{u} \in D.$

Finally, we define the dual space to $T_p(M)$, denoted by $T_p(M)^*$, as the set of all continuous and linear functionals (in fact real functions)
defined on $T_p(M),$ that is,
$$T_p(M)^*=\left\{ f:T_p(M) \rightarrow \mathbb{R}\;:\; f(\mathbf{v})=\alpha \cdot \mathbf{v},\; \text{ for  some } \alpha \in \mathbb{R}^n,\; \forall
\mathbf{v}=v_i \frac{\partial \mathbf{r}(\mathbf{u})}{\partial u_i} \in T_p(M)\right\}.$$
\end{dfn}
\begin{thm} Let $M \subset \mathbb{R}^n$ be a m-dimensional $C^1$ class surface, where
$$M=\{ \mathbf{r}(\mathbf{u}) \in \mathbb{R}^n\;:\; \mathbf{u} \in D \subset \mathbb{R}^m\}.$$

Let $\mathbf{u} \in D$, $p=\mathbf{r}(\mathbf{u}) \in D$ and $f \in C^1(M).$

Define $df: T_p(M) \rightarrow \mathbb{R}$ by
$$df(\mathbf{v})=\lim_{ \varepsilon \rightarrow 0}
\frac{(f \circ \mathbf{r})(\{u_i\}+\varepsilon \{v_i\})-(f \circ \mathbf{r})(\{u_i\})}{\varepsilon},$$
$\forall \mathbf{v}= v_i \frac{\partial \mathbf{r}(\mathbf{u})}{\partial u_i} \in T_p(M).$

Under such hypotheses, $$df \in T_p(M)^*.$$

Reciprocally, let $F \in T_p(M)^*.$

Under such assumption, there exists $f \in C^1(M)$ such that $$F(\mathbf{v})=df(\mathbf{v}),\; \forall \mathbf{v} \in T_p(M).$$
\end{thm}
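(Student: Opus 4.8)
The plan is to treat the two assertions separately, since the forward direction is essentially a chain-rule computation and the reciprocal one is an explicit construction. First I would observe that the limit defining $df(\mathbf{v})$ is nothing more than the directional derivative at $\mathbf{u}$ of the scalar function $g:=f\circ\mathbf{r}\in C^1(D)$ along the coordinate vector $(v_1,\ldots,v_m)$. Since $g$ is of $C^1$ class, this directional derivative exists and equals $\sum_{i=1}^m \frac{\partial g}{\partial u_i}\,v_i$. To promote this into a well-defined, linear object on $T_p(M)$, I would invoke the standing hypothesis that $\left\{\frac{\partial \mathbf{r}}{\partial u_1},\ldots,\frac{\partial \mathbf{r}}{\partial u_m}\right\}$ is linearly independent: this guarantees that each $\mathbf{v}\in T_p(M)$ has a \emph{unique} coordinate representation $\mathbf{v}=v_i\frac{\partial\mathbf{r}}{\partial u_i}$, so linearity in the $v_i$ transfers to linearity in $\mathbf{v}$, and continuity is automatic because $T_p(M)$ is finite dimensional.

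It remains to exhibit the vector $\alpha\in\mathbb{R}^n$ with $df(\mathbf{v})=\alpha\cdot\mathbf{v}$ required by the definition of $T_p(M)^*$. This is equivalent to solving $\alpha\cdot\frac{\partial\mathbf{r}}{\partial u_i}=\frac{\partial g}{\partial u_i}$ for each $i\in\{1,\ldots,m\}$. I would look for $\alpha$ inside the tangent space itself, writing $\alpha=c_j\frac{\partial\mathbf{r}}{\partial u_j}$; the system then becomes $G_{ij}\,c_j=\frac{\partial g}{\partial u_i}$, where $G_{ij}=\frac{\partial\mathbf{r}}{\partial u_i}\cdot\frac{\partial\mathbf{r}}{\partial u_j}$ is the Gram matrix (the first fundamental form). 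This matrix is invertible precisely because the tangent vectors are linearly independent, so $c=G^{-1}\left(\frac{\partial g}{\partial u_i}\right)$ is well defined and yields an admissible $\alpha$, completing the proof that $df\in T_p(M)^*$.

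For the reciprocal statement, given $F\in T_p(M)^*$ there is, by definition, a fixed $\alpha\in\mathbb{R}^n$ with $F(\mathbf{v})=\alpha\cdot\mathbf{v}$ for all $\mathbf{v}\in T_p(M)$. I would simply define $f$ on $M$ as the restriction of the ambient linear functional $x\mapsto\alpha\cdot x$, that is $f(\mathbf{r}(\mathbf{u}))=\alpha\cdot\mathbf{r}(\mathbf{u})$. Then $f\circ\mathbf{r}=\alpha\cdot\mathbf{r}$ is a finite linear combination of the $C^1$ components $\hat{X}_k$, hence $f\in C^1(M)$, and $\frac{\partial (f\circ\mathbf{r})}{\partial u_i}=\alpha\cdot\frac{\partial\mathbf{r}}{\partial u_i}$. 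Substituting into the forward formula gives $df(\mathbf{v})=v_i\,\alpha\cdot\frac{\partial\mathbf{r}}{\partial u_i}=\alpha\cdot\mathbf{v}=F(\mathbf{v})$, as desired.

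The only genuinely non-routine point is the construction of $\alpha$ in the forward direction: because $f$ is given merely on the surface $M$, there is no ambient gradient one may quote directly, and one must instead produce $\alpha$ by solving the above linear system. The resolution hinges entirely on the invertibility of the Gram matrix of the first fundamental form, which is exactly where the linear independence assumption on the tangent vectors becomes indispensable; everything else reduces to the definition of the directional derivative and the bookkeeping of the coordinate representation.
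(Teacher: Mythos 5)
Your proof is correct, and your reciprocal half coincides with the paper's: both define $f(\mathbf{w})=\alpha\cdot\mathbf{w}$ on $M$ and recompute $df$ to recover $F$. The forward half, however, takes a genuinely different route. The paper applies the chain rule through the ambient coordinates, writing
$df(\mathbf{v})=\frac{\partial (f\circ \mathbf{r})(\mathbf{u})}{\partial \hat{X}_j}\,\frac{\partial \hat{X}_j(\mathbf{u})}{\partial u_i}\,v_i=\nabla f(\mathbf{r}(\mathbf{u}))\cdot\mathbf{v}$,
and simply takes $\alpha=\nabla f(\mathbf{r}(\mathbf{u}))$; this presupposes that $f$ can be differentiated with respect to the ambient variables, i.e.\ it implicitly reads $f\in C^1(M)$ as ``$f$ admits an ambient $C^1$ extension near $M$.'' You instead use only the intrinsic data $g=f\circ\mathbf{r}\in C^1(D)$ and manufacture $\alpha$ inside $T_p(M)$ by solving the Gram system $G_{ij}c_j=\partial g/\partial u_i$, where $G$ is invertible (indeed positive definite, since the dot product in this section is the Euclidean one) precisely because the tangent vectors are linearly independent. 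This buys rigor under the weaker reading of $C^1(M)$ --- exactly the issue you flag, that no ambient gradient is available a priori --- and it also makes explicit a second role of the linear independence hypothesis that the paper passes over: the coordinates $v_i$ of $\mathbf{v}$ must be unique for the defining limit of $df(\mathbf{v})$ to be unambiguous. What the paper's route buys in exchange is brevity and the natural identification $\alpha=\nabla f$; note that when an ambient extension does exist, your $\alpha$ is exactly the tangential projection of that gradient onto $T_p(M)$, so the two constructions are consistent.
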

\begin{proof} Let $\mathbf{v}= v_i \frac{\partial \mathbf{r}(\mathbf{u})}{\partial u_i} \in T_p(M).$

Thus,
\begin{eqnarray}
df(\mathbf{v})&=&\lim_{ \varepsilon \rightarrow 0}
\frac{(f \circ \mathbf{r})(\{u_i\}+\varepsilon \{v_i\})-(f \circ \mathbf{r})(\{u_i\})}{\varepsilon}
\nonumber \\ &=& \frac{\partial (f \circ \mathbf{r})(\mathbf{u})}{\partial \hat{X}_j} \frac{\partial \hat{X}_j(\mathbf{u})}{\partial u_i} v_i
\nonumber \\ &=& \nabla f(\mathbf{r}(\mathbf{u})) \cdot \mathbf{v} \nonumber \\ &=& \alpha \cdot \mathbf{v}, \end{eqnarray}
where $$\alpha=\nabla f(\mathbf{r}(\mathbf{u})),$$ so that $df \in T_p(M)^*.$

Reciprocally, assume $F \in T_p(M)^*,$ that is, suppose there exists $\alpha \in \mathbb{R}^n$ such that $$F(\mathbf{v})= \alpha \cdot \mathbf{v},$$
$\forall  \mathbf{v}= v_i \frac{\partial \mathbf{r}(\mathbf{u})}{\partial u_i} \in T_p(M).$

Define $f: M \rightarrow \mathbb{R}$ by $$f(\mathbf{w})=\alpha \cdot \mathbf{w},\; \forall \mathbf{w} \in M.$$

In particular, $$f(\mathbf{r}(\mathbf{u}))= \alpha \cdot \mathbf{r}(\mathbf{u})=\alpha_j \hat{X}_j(\mathbf{u}),\; \forall \mathbf{u} \in D.$$

For $p=\mathbf{r}(\mathbf{u}) \in M$ and $\mathbf{v}= v_i \frac{\partial \mathbf{r}(\mathbf{u})}{\partial u_i} \in T_p(M)$, we have
\begin{eqnarray}df(\mathbf{v})&=&\lim_{ \varepsilon \rightarrow 0}
\frac{(f \circ \mathbf{r})(\{u_i\}+\varepsilon \{v_i\})-(f \circ \mathbf{r})(\{u_i\})}{\varepsilon}
\nonumber \\ &=&\frac{\partial (f \circ \mathbf{r})(\mathbf{u})}{\partial \hat{X}_j} \frac{\partial \hat{X}_j(\mathbf{u})}{\partial u_i} v_i
\nonumber \\ &=& \alpha_j\frac{\partial \hat{X}_j(\mathbf{u})}{\partial u_i} v_i \nonumber \\ &=& \alpha \cdot \mathbf{v}, \end{eqnarray}

Therefore, $$F(\mathbf{v})=df(\mathbf{v}),\; \forall \mathbf{v} \in T_p(M).$$

The proof is complete.
\end{proof}

At this point, we present the tangential vector field definition, to be addressed in the subsequent results and sections.
\begin{dfn}[Vector field] Let $M \subset \mathbb{R}^n$ be a m-dimensional $C^1$ class surface, where $1 \leq m < n.$ We define the set of
$C^1$ class tangential vector fields in $M$, denoted by $\mathcal{X}(M)$, as
$$\mathcal{X}(M)=\left\{X=X_i(\mathbf{u})\frac{\partial \mathbf{r}(\mathbf{u})}{\partial u_i} \in T(M)=\{T_p(M)\;:\; p=\mathbf{r}(\mathbf{u}) \in M\}\right\},$$
where $X_i:D \rightarrow \mathbb{R}$ is a $C^1$ class function, $\forall i \in \{1, \ldots,m\}.$

Let $f \in C^1(M)$ and $X \in \mathcal{X}(M)$. We define the derivative of $f$ on the direction $X$ at $\mathbf{u}$, denoted by $(X \cdot f)(p)$,
where $p=\mathbf{r}({\mathbf{u}})$, as
\begin{eqnarray}(X \cdot f)(p)&=& df(X(\mathbf{u})) \nonumber \\ &=&
\lim_{ \varepsilon \rightarrow 0} \frac{ (f\circ \mathbf{r})(\{u_i\}+\varepsilon \{X_i(\mathbf{u})\})-(f \circ \mathbf{r})(\{u_i\})}{\varepsilon}
\nonumber \\ &=& \frac{\partial (f \circ \mathbf{r})(\mathbf{u})}{\partial u_i} X_i(\mathbf{u}).
\end{eqnarray}
\end{dfn}
The next definition is also very important for this work, namely, the connection one.
\begin{dfn}[Connection]Let $M \subset \mathbb{R}^n$ be a m-dimensional $C^1$ class surface, where
$$M=\{ \mathbf{r}(\mathbf{u}) \in \mathbb{R}^n\;:\; \mathbf{u} \in D \subset \mathbb{R}^m\}$$
and $$\mathbf{r}(\mathbf{u})=\hat{X}_1(\mathbf{u})\mathbf{e}_1+\cdots +\hat{X}_n(\mathbf{u})\mathbf{e}_n.$$

We define an affine connection on $M$, as a map $\nabla : \mathcal{X}(M) \times \mathcal{X}(M) \rightarrow \mathcal{X}(M)$ such that
\begin{enumerate}
\item $$\nabla_{fX+gY} Z=f \nabla_X Z+g \nabla_Y Z,$$
\item $$\nabla_X(Y+Z)=\nabla_X Y+\nabla_X Z$$ and
\item $$\nabla_X(fY)=(X\cdot f)Y+f \nabla_XY,$$
\end{enumerate}
$\forall X,Y,Z \in \mathcal{X}(M),\; f,g \in C^{\infty}(M).$
\end{dfn}

About the connection representation, we have the following result.
\begin{thm}Let $M \subset \mathbb{R}^n$ be a m-dimensional $C^1$ class surface, where
$$M=\{ \mathbf{r}(\mathbf{u}) \in \mathbb{R}^n\;:\; \mathbf{u} \in D \subset \mathbb{R}^m\}$$
and $$\mathbf{r}(\mathbf{u})=\hat{X}_1(\mathbf{u})\mathbf{e}_1+\cdots +\hat{X}_n(\mathbf{u})\mathbf{e}_n.$$
Let $\nabla  : \mathcal{X}(M) \times \mathcal{X}(M) \rightarrow \mathcal{X}(M)$ be an affine connection on $M$.
Let $\mathbf{u} \in D$, $p=\mathbf{r}(\mathbf{u}) \in M$ and $X,Y \in \mathcal{X}(M)$ be such that
$$X= X_i(\mathbf{u}) \frac{\partial \mathbf{r}(\mathbf{u})}{\partial u_i},$$
and
$$Y= Y_i(\mathbf{u}) \frac{\partial \mathbf{r}(\mathbf{u})}{\partial u_i}.$$

Under such hypotheses, we have
 \begin{eqnarray}\nabla_XY&=&\sum_{i=1}^m\left(X \cdot Y_i+\sum_{j,k=1}^m \Gamma_{jk}^iX_j Y_k\right)\frac{\partial \mathbf{r}(\mathbf{u})}{\partial u_i} \in T_p(M),\end{eqnarray}
 where $\Gamma_{jk}^i$ are defined through the relations,

 $$\nabla_{\frac{\partial \mathbf{r}(\mathbf{u})}{\partial u_j}}\frac{\partial \mathbf{r}(\mathbf{u})}{\partial u_k}
 = \Gamma_{jk}^i(\mathbf{u}) \frac{\partial \mathbf{r}(\mathbf{u})}{\partial u_i}.$$
 \end{thm}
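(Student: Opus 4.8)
The plan is to compute $\nabla_X Y$ directly by exploiting the three defining axioms of an affine connection together with the definition of the coefficients $\Gamma_{jk}^i$. Writing $Y = Y_k(\mathbf{u}) \frac{\partial \mathbf{r}(\mathbf{u})}{\partial u_k}$ with the sum over $k$ understood, I would first apply axiom (2) (additivity in the second slot) to distribute $\nabla_X$ across the sum over $k$, thereby reducing the problem to computing $\nabla_X \left( Y_k \frac{\partial \mathbf{r}(\mathbf{u})}{\partial u_k} \right)$ for each fixed $k$.

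Next, to each such term I would apply axiom (3), the Leibniz rule, which yields
$$\nabla_X \left( Y_k \frac{\partial \mathbf{r}(\mathbf{u})}{\partial u_k} \right) = (X \cdot Y_k) \frac{\partial \mathbf{r}(\mathbf{u})}{\partial u_k} + Y_k\, \nabla_X \frac{\partial \mathbf{r}(\mathbf{u})}{\partial u_k}.$$
This is precisely the step that produces the directional-derivative term $X \cdot Y_k$, where $X \cdot Y_k$ denotes the derivative of the scalar coefficient $Y_k$ along $X$ in the sense of the vector field definition given above.

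To handle the remaining term $\nabla_X \frac{\partial \mathbf{r}(\mathbf{u})}{\partial u_k}$, I would write $X = X_j(\mathbf{u}) \frac{\partial \mathbf{r}(\mathbf{u})}{\partial u_j}$ and invoke axiom (1) (tensoriality in the first slot) to extract the coefficient functions $X_j$, obtaining $X_j\, \nabla_{\frac{\partial \mathbf{r}(\mathbf{u})}{\partial u_j}} \frac{\partial \mathbf{r}(\mathbf{u})}{\partial u_k}$. The defining relation for the connection coefficients then replaces $\nabla_{\frac{\partial \mathbf{r}(\mathbf{u})}{\partial u_j}} \frac{\partial \mathbf{r}(\mathbf{u})}{\partial u_k}$ by $\Gamma_{jk}^i \frac{\partial \mathbf{r}(\mathbf{u})}{\partial u_i}$, so that this contribution becomes $X_j Y_k \Gamma_{jk}^i \frac{\partial \mathbf{r}(\mathbf{u})}{\partial u_i}$.

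Finally, I would assemble the two contributions and, in the directional-derivative term, relabel the dummy index $k$ as $i$ so that every summand carries the common factor $\frac{\partial \mathbf{r}(\mathbf{u})}{\partial u_i}$; factoring this out gives the asserted expression $\nabla_X Y = \sum_i \left( X \cdot Y_i + \sum_{j,k} \Gamma_{jk}^i X_j Y_k \right) \frac{\partial \mathbf{r}(\mathbf{u})}{\partial u_i}$, which visibly lies in $T_p(M)$. Since each step is a direct substitution, there is no genuine difficulty here; the only point demanding care is the bookkeeping of the summation indices and the correct order in which the axioms are applied — in particular, using Leibniz (axiom 3) before tensoriality (axiom 1), so that the $X \cdot Y_i$ term is generated exactly once and the coefficients $X_j$ are pulled out only from the purely tensorial part.
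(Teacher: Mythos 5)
Your proposal is correct and takes essentially the same route as the paper: expand $X$ and $Y$ in components, apply the three connection axioms together with the defining relation $\nabla_{\partial \mathbf{r}/\partial u_j}\,\partial \mathbf{r}/\partial u_k=\Gamma_{jk}^i\,\partial \mathbf{r}/\partial u_i$, and relabel dummy indices. The only difference is the order of the axioms --- the paper pulls out the coefficients $X_i$ via axiom (1) first and then applies the Leibniz rule to $\nabla_{\partial \mathbf{r}/\partial u_i}$, recombining $X_i\left(\frac{\partial \mathbf{r}}{\partial u_i}\cdot Y_j\right)$ into $X\cdot Y_j$ at the end, so your closing claim that Leibniz must be applied before tensoriality is not actually necessary; both orders generate the $X\cdot Y_i$ term exactly once.
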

 \begin{proof}
 Observe that
 \begin{eqnarray}
 \nabla_XY&=& \nabla_{X_i \frac{\partial \mathbf{r}(\mathbf{u})}{\partial u_i}} \left(Y_j \frac{\partial \mathbf{r}(\mathbf{u})}{\partial u_j}\right) \nonumber \\ &=&
 X_i \nabla_{\frac{\partial \mathbf{r}(\mathbf{u})}{\partial u_i}}\left(Y_j \frac{\partial \mathbf{r}(\mathbf{u})}{\partial u_j}\right) \nonumber \\ &=& X_i\left( \frac{\partial \mathbf{r}(\mathbf{u})}{\partial u_i} \cdot Y_j\right) \frac{\partial \mathbf{r}(\mathbf{u})}{\partial u_j}+X_iY_j \nabla_{\frac{\partial \mathbf{r}(\mathbf{u})}{\partial u_i}} \frac{\partial \mathbf{r}(\mathbf{u})}{\partial u_j} \nonumber \\ &=&
 \left( X_i\frac{\partial \mathbf{r}(\mathbf{u})}{\partial u_i} \cdot Y_j\right) \frac{\partial \mathbf{r}(\mathbf{u})}{\partial u_j}+X_iY_j  \Gamma_{ij}^k \frac{\partial \mathbf{r}(\mathbf{u})}{\partial u_k} \nonumber \\ &=&
 \sum_{i=1}^m\left(X \cdot Y_i+\sum_{j,k=1}^m \Gamma_{jk}^iX_j Y_k\right)\frac{\partial \mathbf{r}(\mathbf{u})}{\partial u_i}. \end{eqnarray}

  The proof is complete.
\end{proof}
\begin{remark} If the connection in question is such that
$$\Gamma_{jk}^i=\frac{1}{2}g^{il}\left(\frac{\partial g_{kl}}{\partial u_j}+\frac{\partial g_{jl}}{\partial u_k}-\frac{\partial g_{jk}}{\partial u_l} \right)$$ such a connection is said to be the Levi-Civita one. In the next lines we assume the concerning connection is indeed the Levi-Civita one.
\end{remark}

We finish this section with the Lie Bracket definition.
\begin{dfn}[Lie bracket] Let $M \subset \mathbb{R}^n$ be a $C^1$ class  m-dimensional surface where $1 \leq m < n.$

Let $X,Y \in \tilde{\mathcal{X}}(M),$  where $\tilde{\mathcal{X}}(M)$ denotes the set of the $C^\infty(M)=\cap_{k \in \mathbb{N}}C^k(M)$ class vector fields. We define the Lie bracket of $X$ and $Y$, denoted by $[X,Y] \in \tilde{\mathcal{X}}(M)$, by
$$[X,Y]=(X\cdot Y_i-Y\cdot X_i) \frac{\partial \mathbf{r}(\mathbf{u})}{\partial u_i},$$
where
$$X=X_i \frac{\partial \mathbf{r}(\mathbf{u})}{\partial u_i}$$ and
$$Y=Y_i \frac{\partial \mathbf{r}(\mathbf{u})}{\partial u_i}.$$
\end{dfn}

\section{ A relativistic quantum mechanics action}

In this section we present a proposal for a relativistic quantum mechanics action.

Let $\Omega \subset \mathbb{R}^3$ be an open, bounded,  connected set with a $C^1$ class boundary denoted by $\partial \Omega.$
Denoting by $c$ the speed of light, in a free volume context, for a $C^1$ class function $\mathbf{r}$ and $\hat{\mathbf{u}} \in W^{1,2}(\Omega \times [0,T]; \mathbb{R}^4)$, let $(\mathbf{r} \circ \hat{\mathbf{u}}):\Omega \times [0,T]  \rightarrow \mathbb{R}^4$ be a particle position field where
$$\mathbf{r}(\hat{\mathbf{u}}(\mathbf{x},t))=(ct,X_1(\mathbf{u}(\mathbf{x},t)),X_2(\mathbf{u}(\mathbf{x},t)),X_3(\mathbf{u}(\mathbf{x},t))),$$
 with corresponding mass density $$(\rho \circ \hat{\mathbf{u}}): \Omega \times [0,T] \rightarrow \mathbb{R}^+,$$
where $[0,T]$ is a time interval.

We  denote $\hat{\mathbf{u}}: \Omega \times [0,T] \rightarrow \mathbb{R}^4$ point-wise as
$$\hat{\mathbf{u}}(\mathbf{x},t)=(u_0(\mathbf{x},t), \mathbf{u}(\mathbf{x},t)),$$ where $$u_0(\mathbf{x},t)=ct,$$ and
$$\mathbf{u}(\mathbf{x},t)=(u_1(\mathbf{x},t), u_2(\mathbf{x},t),u_3(\mathbf{x},t)),$$
$\forall (\mathbf{x},t)=((x_1,x_2,x_3),t) \in \Omega \times [0,T].$

At this point, we recall to have defined  $\phi(\hat{\mathbf{u}}(\mathbf{x},t))$ as a complex function such that $$|\phi(\hat{\mathbf{u}}(\mathbf{x},t))|^2
=\frac{\rho(\hat{\mathbf{u}}(\mathbf{x},t))}{m},$$ where $m$ denotes the total system mass at  rest. Also, we assume $\phi$ to be a $C^2$ class function and
define
$$d \tau^2=c^2 dt^2-dX_1(\mathbf{u}(\mathbf{x},t))^2-dX_2(\mathbf{u}(\mathbf{x},t))^2-dX_3(\mathbf{u}(\mathbf{x},t))^2,$$
so that the mass differential will be denoted by
\begin{eqnarray}dm&=&\frac{\rho(\hat{\mathbf{u}}(\mathbf{x},t))}{\sqrt{1-\frac{v^2}{c^2}}} \sqrt{-g}\;|\det (\hat{\mathbf{u}}'(\mathbf{x},t))|\;d\mathbf{x}
\nonumber \\ &=&\frac{m |\phi(\hat{\mathbf{u}}(\mathbf{x},t))|^2}{\sqrt{1-\frac{v^2}{c^2}}} \sqrt{-g}\;|\det (\hat{\mathbf{u}}'(\mathbf{x},t))|\;d\mathbf{x}
,\end{eqnarray}
where $d \mathbf{x}=dx_1dx_2dx_3$ and $\hat{\mathbf{u}}'(\mathbf{x},t)$ denotes the Jacobian matrix of the vectorial function $\hat{\mathbf{u}}(\mathbf{x},t).$

Also,
$$\mathbf{g}_i=\frac{\partial \mathbf{r}(\hat{\mathbf{u}})}{\partial u_i},\; \forall i \in \{0,1,2,3\},$$
 $$g_{ij}=\mathbf{g}_i \cdot \mathbf{g}_j,\; \forall i,j \in \{0,1,2,3\},$$
and $$g=\det\{g_{ij}\}.$$

Moreover, $$\{g^{ij}\}=\{g_{ij}\}^{-1}.$$
\subsection{The kinetics energy}
Observe  that \begin{eqnarray}
c^2-v^2&=&-\frac{d\mathbf{r}(\hat{\mathbf{u}})}{dt} \cdot \frac{d\mathbf{r}(\hat{\mathbf{u}})}{dt}
\nonumber \\ &=& -\left(\frac{\partial \mathbf{r}(\hat{\mathbf{u}})}{\partial u_i}\frac{\partial u_i}{\partial t}\right)\cdot
\left(\frac{\partial \mathbf{r}(\hat{\mathbf{u}})}{\partial u_j}\frac{\partial u_j}{\partial t}\right)
\nonumber \\ &=& -\frac{\partial \mathbf{r}(\hat{\mathbf{u}})}{\partial u_i} \cdot\frac{\partial \mathbf{r}(\hat{\mathbf{u}})}{\partial u_j}
 \frac{\partial u_i}{\partial t} \frac{\partial u_j}{\partial t} \nonumber \\ &=& -g_{ij}\frac{\partial u_i}{\partial t} \frac{\partial u_j}{\partial t},
 \end{eqnarray}
where the  product in question is generically given by $$ \mathbf{y} \cdot \mathbf{z} =-y_0z_0+\sum_{i=1}^3y_i z_i, \; \forall \mathbf{y}=(y_0,y_1,y_2,y_3), \; \mathbf{z}=(z_0,z_1,z_2,z_3) \in \mathbb{R}^4$$
and
 $$v=\sqrt{\left(\frac{dX_1(\mathbf{u}(\mathbf{x},t))}{dt}\right)^2+\left(\frac{dX_2(\mathbf{u}(\mathbf{x},t))}{dt}\right)^2
+\left(\frac{dX_3(\mathbf{u}(\mathbf{x},t))}{dt}\right)^2}.$$

The semi-classical kinetics energy differential is given by   \begin{eqnarray}dE_c&=& \frac{d \mathbf{r}(\hat{\mathbf{u}})}{d t}\cdot \frac{d \mathbf{r}(\hat{\mathbf{u}})}{d t} \;dm\nonumber \\ &=& -\left(\frac{d\tau}{dt}\right)^2\;dm \nonumber \\ &=&-(c^2-v^2)\;dm,\end{eqnarray} so that
 \begin{eqnarray}dE_c&=&  -m\;\frac{(c^2-v^2)}{\sqrt{1-\frac{v^2}{c^2}}}\;|\phi(\hat{\mathbf{u}})|^2\; \sqrt{-g}\;|\det (\hat{\mathbf{u}}'(\mathbf{x},t))|\;d\mathbf{x} \nonumber \\ &=&
-m c^2\sqrt{1-\frac{v^2}{c^2}}\;|\phi(\hat{\mathbf{u}})|^2\; \sqrt{-g}\;|\det (\hat{\mathbf{u}}'(\mathbf{x},t))|\;d\mathbf{x}
\nonumber \\ &=& -mc\;\sqrt{c^2-v^2}\;|\phi(\hat{\mathbf{u}})|^2\; \sqrt{-g}\;|\det (\hat{\mathbf{u}}'(\mathbf{x},t))|\;d\mathbf{x} \nonumber \\ &=&
-mc\;\sqrt{-g_{ij}\frac{\partial u_i}{\partial t} \frac{\partial u_j}{\partial t}}\;|\phi(\hat{\mathbf{u}})|^2 \;\sqrt{-g}\;|\det (\hat{\mathbf{u}}'(\mathbf{x},t))|\;d\mathbf{x}, \end{eqnarray}
and thus, the semi-classical kinetics energy $E_c$ is given by
$$E_c=\int_0^T\int_\Omega dE_c\;dt,$$
that is,
$$E_c=-\int_0^T \int_\Omega mc\;\sqrt{-g_{ij}\frac{\partial u_i}{\partial t} \frac{\partial u_j}{\partial t}}\;|\phi(\hat{\mathbf{u}})|^2\; \sqrt{-g}\;|\det (\hat{\mathbf{u}}'(\mathbf{x},t))|\;d\mathbf{x}\;dt.$$

\subsection{The energy part relating the curvature and wave function}

At this point we define an energy part, related to the  Riemann curvature tensor,  denoted by $E_q$, where
$$E_q=\frac{\gamma}{2} \int_0^T\int_\Omega g^{jk} R_{jk}
\sqrt{-g}\; |\det(\hat{\mathbf{u}}'(\mathbf{x},t))|\;d\mathbf{x}\;dt.$$
and $$R_{jk}=Re[R_{jik}^i(\phi)].$$  Also, generically $Re[z]$ denotes the real part of $z \in \mathbb{C}$ and $R_{ijk}^l(\phi)$ is such that
\begin{eqnarray}&&\nabla_{\left(\phi \frac{\partial \mathbf{r}(\hat{\mathbf{u}})}{\partial u_i}\right)}\nabla_{ \frac{\partial \mathbf{r}(\hat{\mathbf{u}})}{\partial u_j}}
\left(\phi^* \frac{\partial \mathbf{r}(\hat{\mathbf{u}})}{\partial u_k}\right)-\nabla_{\left(\phi \frac{\partial \mathbf{r}(\hat{\mathbf{u}})}{\partial u_j}\right)}\nabla_{ \frac{\partial \mathbf{r}(\hat{\mathbf{u}})}{\partial u_i}}
\left(\phi^* \frac{\partial \mathbf{r}(\hat{\mathbf{u}})}{\partial u_k}\right)
\nonumber \\ &&
-\nabla_{\left[ \phi \frac{\partial \mathbf{r}(\hat{\mathbf{u}})}{\partial u_i},\frac{\partial \mathbf{r}(\hat{\mathbf{u}})}{\partial u_j}\right]}
\left(\phi^* \frac{\partial \mathbf{r}(\hat{\mathbf{u}})}{\partial u_k}\right)
=R^l_{ijk}(\phi) \frac{\partial \mathbf{r}(\hat{\mathbf{u}})}{\partial u_l}.\end{eqnarray}

More specifically, we have
\begin{eqnarray} &&\nabla_{\left(\phi \frac{\partial \mathbf{r}(\hat{\mathbf{u}})}{\partial u_i}\right)}\;\nabla_{ \frac{\partial \mathbf{r}(\hat{\mathbf{u}})}{\partial u_j}}\;
\left(\phi^* \frac{\partial \mathbf{r}(\hat{\mathbf{u}})}{\partial u_k}\right) \nonumber \\
&=& \nabla_{\left(\phi\; \frac{\partial \mathbf{r}(\hat{\mathbf{u}})}{\partial u_i}\right)}\;\left(\frac{\partial \phi^*}{\partial u_j}
\;\frac{\partial \mathbf{r}(\hat{\mathbf{u}})}{\partial u_k}+ \phi^*\;\Gamma_{jk}^l\;\frac{\partial \mathbf{r}(\hat{\mathbf{u}})}{\partial u_l}\right)
\nonumber \\ &=& \phi\;\frac{\partial^2 \phi^*}{\partial u_i\partial u_j}\;\frac{\partial \mathbf{r}(\hat{\mathbf{u}})}{\partial u_k}
+\phi \;\frac{\partial \phi^*}{\partial u_j}\; \Gamma_{ik}^p\; \frac{\partial \mathbf{r}(\mathbf{u})}{\partial u_p}
\nonumber \\ &&+ \phi \;\frac{\partial \left(\phi^* \;\Gamma_{jk}^l\right)}{\partial u_i}\;\frac{\partial \mathbf{r}(\hat{\mathbf{u}})}{\partial u_l}
\nonumber \\ &&+|\phi|^2\;\Gamma_{ij}^l\; \Gamma_{il}^p \;\frac{\partial \mathbf{r}(\hat{\mathbf{u}})}{\partial u_p} \nonumber \\ &=&
\phi\;\frac{\partial^2 \phi^*}{\partial u_i\partial u_j}\;\delta_{kl}\;\frac{\partial \mathbf{r}(\hat{\mathbf{u}})}{\partial u_l}
+\phi \frac{\partial \phi^*}{\partial u_j} \Gamma_{ik}^l \frac{\partial \mathbf{r}(\hat{\mathbf{u}})}{\partial u_l}
\nonumber \\ &&+ \phi\; \frac{\partial (\phi^* \Gamma_{jk}^l)}{\partial u_i}\;\frac{\partial \mathbf{r}(\hat{\mathbf{u}})}{\partial u_l}
\nonumber \\ &&+|\phi|^2\;\Gamma_{jk}^p \;\Gamma_{ip}^l \;\frac{\partial \mathbf{r}(\hat{\mathbf{u}})}{\partial u_l} \end{eqnarray}
and similarly
\begin{eqnarray} &&\nabla_{\left(\phi \frac{\partial \mathbf{r}(\hat{\mathbf{u}})}{\partial u_j}\right)}\;\nabla_{ \frac{\partial \mathbf{r}(\hat{\mathbf{u}})}{\partial u_i}}\;
\left(\phi^* \frac{\partial \mathbf{r}(\hat{\mathbf{u}})}{\partial u_k}\right) \nonumber \\ &=&
\phi\;\frac{\partial^2 \phi^*}{\partial u_i\partial u_j}\;\delta_{kl}\;\frac{\partial \mathbf{r}(\hat{\mathbf{u}})}{\partial u_l}
+\phi \frac{\partial \phi^*}{\partial u_i} \Gamma_{jk}^l \frac{\partial \mathbf{r}(\hat{\mathbf{u}})}{\partial u_l}
\nonumber \\ &&+ \phi\; \frac{\partial (\phi^* \Gamma_{ik}^l)}{\partial u_j}\;\frac{\partial \mathbf{r}(\hat{\mathbf{u}})}{\partial u_l}
\nonumber \\ &&+|\phi|^2\;\Gamma_{ik}^p \;\Gamma_{jp}^l \;\frac{\partial \mathbf{r}(\hat{\mathbf{u}})}{\partial u_l}. \end{eqnarray}


Moreover, \begin{eqnarray}&&\nabla_{\left[ \phi \frac{\partial \mathbf{r}(\hat{\mathbf{u}})}{\partial u_i},\frac{\partial \mathbf{r}(\hat{\mathbf{u}})}{\partial u_j}\right]}
\left(\phi^* \frac{\partial \mathbf{r}(\hat{\mathbf{u}})}{\partial u_k}\right)
 \nonumber \\ &=& \nabla_{\left(-\frac{\partial \mathbf{r}(\hat{\mathbf{u}})}
{\partial u_j}\cdot \phi \right) \frac{\partial \mathbf{r}(\hat{\mathbf{u}})}{\partial u_i}}\left(\phi^*\frac{\partial \mathbf{r}(\hat{\mathbf{u}})}
{\partial u_k}\right)  \nonumber \\ &=& -\nabla_{\left(\frac{\partial \phi}{\partial u_j}\frac{\partial \mathbf{r}(\hat{\mathbf{u}})}
{\partial u_i}\right) }\left(\phi^*\frac{\partial \mathbf{r}(\hat{\mathbf{u}})}{\partial u_k}\right) \nonumber \\ &=&
-\frac{\partial \phi}{\partial u_j} \nabla_{\frac{\partial \mathbf{r}(\hat{\mathbf{u}})}
{\partial u_i}}\left(\phi^*\frac{\partial \mathbf{r}(\hat{\mathbf{u}})}{\partial u_k}\right)
\nonumber \\ &=& -\frac{\partial \phi}{\partial u_j}\;\frac{\partial \phi^*}{\partial u_i}\;\frac{\partial \mathbf{r}(\hat{\mathbf{u}})}{\partial u_k}
- \frac{\partial \phi}{\partial u_j}\; \phi^*\nabla_{\frac{\partial \mathbf{r}(\hat{\mathbf{u}})}
{\partial u_i}}\frac{\partial \mathbf{r}(\hat{\mathbf{u}})}{\partial u_k}\nonumber \\ &=&
-\frac{\partial \phi}{\partial u_j}\;\frac{\partial \phi^*}{\partial u_i}\;\frac{\partial \mathbf{r}(\hat{\mathbf{u}})}{\partial u_k}
- \frac{\partial \phi}{\partial u_j}\; \phi^*\Gamma_{ik}^l\frac{\partial \mathbf{r}(\hat{\mathbf{u}})}{\partial u_l}
\nonumber \\ &=& -\frac{\partial \phi}{\partial u_j}\;\frac{\partial \phi^*}{\partial u_i}\;\delta_{kl}\;\frac{\partial \mathbf{r}(\hat{\mathbf{u}})}{\partial u_l}
- \frac{\partial \phi}{\partial u_j}\; \phi^*\Gamma_{ik}^l\frac{\partial \mathbf{r}(\hat{\mathbf{u}})}{\partial u_l}.
\end{eqnarray}

Thus,
\begin{eqnarray}R_{ijk}^l(\phi)&=&\phi\;\frac{\partial^2 \phi^*}{\partial u_i\partial u_j}\;\delta_{kl}
+\phi\; \frac{\partial \phi^*}{\partial u_j} \Gamma_{ik}^l
+ \phi\; \frac{\partial (\phi^* \Gamma_{jk}^l)}{\partial u_i}
+|\phi|^2\;\Gamma_{jk}^p \;\Gamma_{ip}^l\nonumber \\ &&
-\phi\;\frac{\partial^2 \phi^*}{\partial u_i\partial u_j}\;\delta_{kl}
-\phi \frac{\partial \phi^*}{\partial u_i} \Gamma_{jk}^l- \phi\; \frac{\partial (\phi^* \Gamma_{ik}^l)}{\partial u_j}
-|\phi|^2\;\Gamma_{ik}^p \;\Gamma_{jp}^l\nonumber \\ &&
+\frac{\partial \phi}{\partial u_j}\;\frac{\partial \phi^*}{\partial u_i} \delta_{kl}
+\frac{\partial \phi}{\partial u_j}\; \phi^*\Gamma_{ik}^l.
\end{eqnarray}

Simplifying this last result, we obtain
\begin{eqnarray}R_{ijk}^l(\phi)&=&\phi \;\frac{\partial \phi^*}{\partial u_j} \Gamma_{ik}^l
+ \phi\; \frac{\partial (\phi^* \;\Gamma_{jk}^l)}{\partial u_i}
-\phi\; \frac{\partial \phi^*}{\partial u_i} \Gamma_{jk}^l
 - \phi\; \frac{\partial (\phi^* \Gamma_{ik}^l)}{\partial u_j} \nonumber \\ &&
+|\phi|^2\left(\Gamma_{jk}^p \;\Gamma_{ip}^l-\Gamma_{ik}^p \;\Gamma_{jp}^l\right)+\frac{\partial \phi}{\partial u_j}\;\frac{\partial \phi^*}{\partial u_i} \delta_{kl}
+\frac{\partial \phi}{\partial u_j}\; \phi^*\Gamma_{ik}^l \nonumber \\ &=& |\phi|^2 \left(\frac{\partial \Gamma_{jk}^l}{\partial u_i}-\frac{\partial \Gamma_{ik}^l}{\partial u_j}+\Gamma_{jk}^p \;\Gamma_{ip}^l-\Gamma_{ik}^p \;\Gamma_{jp}^l\right) \nonumber \\ &&+\frac{\partial \phi}{\partial u_j}\;\frac{\partial \phi^*}{\partial u_i} \delta_{kl}
+\frac{\partial \phi}{\partial u_j}\; \phi^*\Gamma_{ik}^l \nonumber \\ &=&
|\phi|^2 \hat{R}_{ijk}^l\nonumber \\ &&+\frac{\partial \phi}{\partial u_j}\;\frac{\partial \phi^*}{\partial u_i} \delta_{kl}
+\frac{\partial \phi}{\partial u_j}\; \phi^*\Gamma_{ik}^l,
\end{eqnarray}
where $$\hat{R}_{ijk}^l=\frac{\partial \Gamma_{jk}^l}{\partial u_i}-\frac{\partial \Gamma_{ik}^l}{\partial u_j}+\Gamma_{jk}^p \;\Gamma_{ip}^l-\Gamma_{ik}^p \;\Gamma_{jp}^l$$ represents the  Riemann curvature tensor.

At this point, we recall to have defined this energy part by
$$
E_q= \frac{\gamma}{2}\int_0^T\int_\Omega R\;\sqrt{-g}\;|\det(\hat{\mathbf{u}}'(\mathbf{x},t))|\;d\mathbf{x}dt,$$
where $$R=g^{jk}R_{jk},$$ and as above indicated, $R_{jk}=Re[R_{jik}^i(\phi)].$

Hence the final expression for the energy (action) is given by

\begin{eqnarray}J(\phi,\mathbf{r}, \hat{\mathbf{u}},E)&=&-E_c+E_q \nonumber \\ &&-\int_0^T E(t)\left( \int_\Omega |\phi(\hat{\mathbf{u}}(\mathbf{x},t))|^2\;\sqrt{-g}\; |\det(\mathbf{u}'(\mathbf{x},t))|\;
d\mathbf{x}/c-1\right)\;cdt,\end{eqnarray}
where $E(t)$ is a Lagrange multiplier related to the total mass constraint.

More explicitly, the final action (the generalized Einstein-Hilbert one), would be given by

\begin{eqnarray}
 &&J(\phi,\mathbf{r}, \hat{\mathbf{u}},E)\nonumber \\ &=&\int_0^T \int_\Omega mc\;\sqrt{-g_{ij}\frac{\partial u_i}{\partial t} \frac{\partial u_j}{\partial t}}\;|\phi(\hat{\mathbf{u}}(\mathbf{x},t))|^2\; \sqrt{-g}\;|\det (\hat{\mathbf{u}}'(\mathbf{x},t))|\;d\mathbf{x}\;dt
 \nonumber \\ &&+\frac{\gamma}{2}\int_0^T\int_\Omega g^{jk}\;\frac{\partial \phi }{\partial u_j}\;\frac{\partial \phi^* }{\partial u_k}\;\sqrt{-g}\;|\det(\hat{\mathbf{u}}'(\mathbf{x},t))|\;d\mathbf{x}dt
\nonumber \\ &&+
\frac{\gamma}{4}\int_0^T\int_\Omega g^{jk}\;\left(\phi^* \frac{\partial \phi }{\partial u_l}+\phi \frac{\partial \phi^* }{\partial u_l}\right)\; \Gamma_{jk}^l\;\sqrt{-g}\;|\det(\hat{\mathbf{u}}'(\mathbf{x},t))|\;d\mathbf{x}dt
 \nonumber \\ &&  +\frac{\gamma}{2}\int_0^T\int_\Omega |\phi|^2\;g^{jk}\;\left(\frac{\partial \Gamma_{lk}^l}{\partial u_j}
 -\frac{\partial \Gamma_{jk}^l}{\partial u_l}+\Gamma_{lk}^p \;\Gamma_{jp}^l-\Gamma_{jk}^p \;\Gamma_{lp}^l\right) \;\sqrt{-g}\;|\det(\hat{\mathbf{u}}'(\mathbf{x},t))|\;d\mathbf{x}dt \nonumber \\ &&-\int_0^T E(t)\left( \int_\Omega |\phi(\hat{\mathbf{u}}(\mathbf{x},t))|^2\;\sqrt{-g}\; |\det(\hat{\mathbf{u}}'(\mathbf{x},t))|\;
d\mathbf{x}/c-1\right)\;cdt.
 \end{eqnarray}

Where  $\gamma$ is an appropriate positive constant to be specified.
\section{Obtaining the relativistic Klein-Gordon equation as an approximation of the previous action}

In particular for the special case in which $$\mathbf{r}(\hat{\mathbf{u}}(\mathbf{x},t))=\hat{\mathbf{u}}(\mathbf{x},t) \approx (ct,\mathbf{x}),$$ so that $$\frac{d \mathbf{r}(\hat{\mathbf{u}}(\mathbf{x},t))}{d t}\approx (c,0,0,0),$$
  we would  obtain $$\mathbf{g}_0\approx (1,0,0,0),\; \mathbf{g}_1\approx (0,1,0,0),\;\mathbf{g}_2\approx (0,0,1,0) \text{ and } \mathbf{g}_3\approx (0,0,0,1) \in \mathbb{R}^4,$$ and $\Gamma_{ij}^k\approx 0,\; \forall i,j,k \in \{0,1,2,3\}.$

Therefore, denoting $\phi(\hat{\mathbf{u}}(\mathbf{x},t))\approx \phi(ct,\mathbf{x})$ simply by a not relabeled $\phi(\mathbf{x},t),$ we may obtain 
\begin{eqnarray} E_q/c &\approx& \frac{\gamma  }{2}\int_0^T\int_\Omega
\left(-\frac{1}{c^2} \frac{\partial \phi(\mathbf{x},t) }{\partial t} \frac{\partial \phi^*(\mathbf{x},t) }{\partial t}
 \right.\nonumber \\ &&\left.+ \sum_{k=1}^3\frac{\partial \phi(\mathbf{x},t)}{\partial x_k} \frac{\partial \phi^*(\mathbf{x},t)}{\partial x_k} \right)\;d\mathbf{x}dt, \end{eqnarray}
 and
 $$E_c/c=m\;c^2\int_0^T\int_\Omega  |\phi|^2 \sqrt{1-v^2/c^2}\;\sqrt{-g}|\det(\hat{\mathbf{u}}'(\mathbf{x},t))|\;d\mathbf{x}\;dt/c \approx m c^2\int_0^T\int_\Omega |\phi(\mathbf{x},t)|^2\;d\mathbf{x}dt.$$

Hence, we would also obtain
\begin{eqnarray}J(\phi,\mathbf{r},\hat{\mathbf{u}},E)/c&\approx& \frac{\gamma}{2}\left(\int_0^T\int_\Omega
-\frac{1}{c^2} \frac{\partial \phi(\mathbf{x},t) }{\partial t} \frac{\partial \phi^*(\mathbf{x},t) }{\partial t}\;d\mathbf{x}dt
 \right.\nonumber \\ &&\left.+\sum_{k=1}^3\int_\Omega \int_0^T \frac{\partial \phi(\mathbf{x},t)}{\partial x_k} \frac{\partial \phi^*(\mathbf{x},t)}{\partial x_k}
 \;d\mathbf{x}dt\right) \nonumber \\ &&+ mc^2 \int_0^T\int_\Omega |\phi(\mathbf{x},t)|^2\;d\mathbf{x}dt  \nonumber \\ &&-\int_0^TE(t)\left( \int_\Omega |\phi(\mathbf{x},t)|^2d\mathbf{x}-1\right)\;dt.
\end{eqnarray}

The Euler Lagrange equations for such an energy are given by

\begin{eqnarray}\label{US34}
&&\frac{\gamma}{2}\left(\frac{1}{c^2} \frac{\partial^2 \phi(\mathbf{x},t)}{\partial t^2}- \sum_{k=1}^3 \frac{\partial^2 \phi(\mathbf{x},t)}{\partial x_k^2} \right)
\nonumber \\ &&+mc^2\phi(\mathbf{x},t)- E(t) \phi(\mathbf{x},t)=0, \text{ in } \Omega,
\end{eqnarray}
where we assume the space of admissible functions is given by $C^1(\Omega \times [0,T];\mathbb{C})$ with the following time and spatial boundary conditions,
$$\phi(\mathbf{x},0)=\phi_0(\mathbf{x}), \text{ in } \Omega,$$
$$\phi(\mathbf{x},T)=\phi_1(\mathbf{x}), \text{ in } \Omega,$$
$$\phi(\mathbf{x},t)=0, \text{ on } \partial\Omega \times[0,T].$$

Equation (\ref{US34}) is the relativistic Klein-Gordon one.

For $E(t)=E \in \mathbb{R}$ (not time dependent), at this point we suggest a solution (and implicitly related time boundary conditions) $\phi(\mathbf{x},t)=e^{-\frac{i E t}{\hbar}}\phi_2(\mathbf{x}),$ where $$\phi_2(\mathbf{x})=0, \text{ on } \partial \Omega.$$

Therefore, replacing this solution into equation (\ref{US34}), we would obtain
$$\left(\frac{\gamma}{2}\left(-\frac{E ^2}{c^2\hbar^2}\phi_2(\mathbf{x}) -\sum_{k=1}^3 \frac{\partial^2 \phi_2(\mathbf{x})}{\partial x_k^2}\right)+mc^2\phi_2(\mathbf{x})- E \phi_2(\mathbf{x})\right)e^{-\frac{i  E  t}{\hbar}}=0,$$
$\text{ in } \Omega.$

Denoting $$E_1=-\frac{\gamma  E^2}{2c^2 \hbar^2}+mc^2-E,$$ the final eigenvalue problem would stand for
$$-\frac{\gamma}{2} \sum_{k=1}^3 \frac{\partial^2 \phi_2(\mathbf{x})}{\partial x_k^2} +E_1\phi_2(\mathbf{x})=0, \text{ in } \Omega$$
where $E_1$ is such that
$$\int_\Omega |\phi_2(\mathbf{x})|^2\;d\mathbf{x}=1.$$

Moreover, from (\ref{US34}), such a solution $\phi(\mathbf{x},t)=e^{-\frac{i  E t}{\hbar}}\phi_2(\mathbf{x})$ is also such that
\begin{eqnarray}\label{US35}
&&\frac{\gamma}{2}\left(\frac{1}{c^2} \frac{\partial^2 \phi(\mathbf{x},t)}{\partial t^2}- \sum_{k=1}^3 \frac{\partial^2 \phi(\mathbf{x},t)}{\partial x_k^2} \right)
\nonumber \\ &&+mc^2\phi(\mathbf{x},t)=i  \hbar\frac{\partial \phi(\mathbf{x},t)}{\partial t}, \text{ in } \Omega.
\end{eqnarray}
At this point, we recall that in quantum mechanics, $$\gamma=\hbar^2/m.$$

Finally, we remark this last equation (\ref{US35}) is a kind of relativistic Schr\"{o}dinger-Klein-Gordon equation.

\section{ A note on the Einstein field equations in the vacuum}

In this section we obtain the Einstein field equations for a field of position in the vacuum.

Let $\Omega \subset \mathbb{R}^3$ be an open, bounded and connected set with a regular boundary denoted by $\partial \Omega$. Let $[0,T]$ be a time interval and
consider the Hilbert-Einstein action given by
$J:U \rightarrow \mathbb{R}$, where for an appropriate constant $\gamma>0$,
$$J(\mathbf{r})=\frac{\gamma}{2} \int_0^T \int_\Omega \hat{R} \sqrt{-g}\; d \mathbf{x} dt,$$
where again
$$\mathbf{u}=(u_0,u_1,u_2,u_3)=(t,x_1,x_2,x_3)=(x_0,x_1,x_2,x_3).$$

Also, $$g_{jk}=\frac{\partial \mathbf{r}(\mathbf{u})}{\partial u_j} \cdot  \frac{\partial \mathbf{r}(\mathbf{u})}{\partial u_k},$$ $$g=det\{g_{jk}\},$$
$$\hat{R}=g^{jk}R_{jk},$$
$$R_{ik}=R_{ijk}^j,$$
and
$$R_{ijk}^l=\frac{\partial \Gamma_{jk}^l}{\partial u_i}-\frac{\partial \Gamma_{ik}^l}{\partial u_j}+\Gamma_{jk}^p \;\Gamma_{ip}^l-\Gamma_{ik}^p \;\Gamma_{jp}^l$$ represents the Riemann curvature tensor.

Finally, as above indicated, $$\mathbf{r}:\Omega \times [0,T] \rightarrow \mathbb{R}^4$$ stands for
$$\mathbf{r}(\mathbf{u})=(ct,X_1(\mathbf{u}),X_2(\mathbf{u}),X_3(\mathbf{u}))$$ and
\begin{eqnarray}U&=&\left\{ \mathbf{r} \in W^{2,2}(\Omega;\mathbb{R}^4) \;:\;  \mathbf{r}(0,u_1,u_2,u_3)=\mathbf{r}_1(u_1,u_2,u_3),
\right.\nonumber \\ && \left.\mathbf{r}(c T,u_1,u_2,u_3)=\mathbf{r}_2(u_1,u_2,u_3) \text{ in } \Omega,\;
 \mathbf{r}|_{\partial \Omega}=\mathbf{r}_0 \text{ on } [0,T] \right\}.
\end{eqnarray}

Hence, already including the Lagrange multipliers, considering $\mathbf{r}$ and $\{g_{jk}\}$ as independent variables, such a functional again denoted by $J(\mathbf{r}, \{g_{jk}\},\lambda)$ is expressed as
$$J(\mathbf{r},\{g_{jk}\},\lambda)=\frac{\gamma}{2} \int_0^T \int_\Omega \hat{R} \sqrt{-g}\;d\mathbf{x}dt +\int_0^T\int_\Omega  \lambda_{jk}\left(\frac{\partial \mathbf{r}}{\partial u_j}
\cdot \frac{\partial \mathbf{r}}{\partial u_k}-g_{jk} \right)\;d \mathbf{x}dt.$$

The variation of such a functional in $g$ give us
$$\gamma \left(R_{jk}-\frac{1}{2}g_{jk}\hat{R}\right)\sqrt{-g}-\lambda_{jk}=0, \text{ in } \Omega.$$

The variation in $\mathbf{r}$, provide us
\begin{equation}\frac{\partial^2X_l(\mathbf{u})}{\partial u_j \partial u_k} \lambda_{jk}+\frac{\partial X_l(\mathbf{u})}{\partial u_j} \frac{\partial \lambda_{jk}}{\partial u_k}=0,
\text{ in } \Omega,\end{equation}
so that
\begin{equation}\label{s1}\frac{\partial^2X_l(\mathbf{u})}{\partial u_j \partial u_k} \left( [R_{jk}-\frac{1}{2}g_{jk}\hat{R}]\sqrt{-g}\right)+\frac{\partial X_l(\mathbf{u})}{\partial u_j} \frac{\partial [(R_{jk}-\frac{1}{2}g_{jk}\hat{R})\sqrt{-g}]}{\partial u_k}=0,
\text{ in } \Omega,\end{equation}

$\forall l \in \{1,2,3\}.$

Observe that the condition $R_{jk}=0 \text{ in } \Omega \times [0,T],\; \forall j,k \in \{0,1,2,3\}$, it is sufficient to solve the system indicated in (\ref{s1}) but it is not necessary.

The system indicated in (\ref{s1}) is the Einstein field one. It is my understanding the actual variable for this system is $\mathbf{r}$ not $\{g_{jk}\}$.

However in some situations, it is possible to solve (\ref{s1}) through a specific metric  $\{(g_0)_{jk}\}$, but one question remains, how to obtain a corresponding $\mathbf{r}.$

With such an issue in mind, given a specific metric $\{(g_0)_{jk}\}$, we suggest the following control problem,
$$\text{ Find } \mathbf{r} \in U \text{ which } \text{ minimizes } J_1(\mathbf{r})= \sum_{j,k=0}^3\left\| \frac{\partial \mathbf{r}(\mathbf{u})}{\partial u_j} \cdot \frac{\partial \mathbf{r}(\mathbf{u})}{\partial u_k} -(g_0)_{jk} \right\|_2^2,$$
subject to
\begin{equation}\frac{\partial^2X_l(\mathbf{u})}{\partial u_j \partial u_k} \left( [R_{jk}-\frac{1}{2}g_{jk}\hat{R}]\sqrt{-g}\right)+\frac{\partial X_l(\mathbf{u})}{\partial u_j} \frac{\partial [(R_{jk}-\frac{1}{2}g_{jk}\hat{R}) \sqrt{-g}]}{\partial u_k}=0,
\text{ in } \Omega,\end{equation}
$\forall l \in \{1,2,3\}.$

\section{Conclusion}This work proposes an action (energy) suitable for the relativistic quantum mechanics context. The Riemann tensor
represents an important part of the action in question, but now including the density distribution of mass in its expression.
In one of the last sections, we obtain the relativistic Klein-Gordon equation as an approximation of the main action, under specific properly described conditions.

We believe the results obtained may be applied to more general models, such as those involving atoms and molecules subject to the presence of electromagnetic fields.

Anyway, we postpone the development of such studies for a future research.

\end{document}